\documentclass[10pt,a4paper]{article}
\usepackage[english]{babel}
\usepackage{amsmath,amssymb,amsfonts,amsthm}
\usepackage{mathrsfs,mathabx}
\usepackage{comment}
\usepackage{color}
\usepackage{appendix}
\usepackage{placeins}
\usepackage{multirow, makecell, array, enumerate}
\usepackage{colortbl}
\usepackage{bm} 
\usepackage{multicol}
\usepackage{hyperref}
\usepackage{dsfont}
\usepackage{graphicx}
\usepackage{subcaption}
\usepackage{algorithmicx}
\usepackage{algorithm}
\newtheorem{proposition}{Proposition}

\newtheorem*{proof*}{Proof}
\definecolor{gray}{rgb}{0.5, 0.5, 0.5}
\definecolor{white}{rgb}{1, 1, 1}
\newtheorem{remark}{Remark}
\algdef{SE}[DOWHILE]{Do}{doWhile}{\algorithmicdo}[1]{\algorithmicwhile\ #1}
\definecolor{verde}{rgb}{0.2, 0.5, 0.3}

\definecolor{magenta}{rgb}{1.0, 0.0, 1.0}

\definecolor{verde}{rgb}{0.2, 0.5, 0.3}

\usepackage{algpseudocode}
\usepackage{tikz}
\usetikzlibrary{positioning, shapes}

\newcommand{\calZ}{\mathcal Z}
\newcommand{\calU}{\mathcal U}

\newcommand{\R}{\mathbb R}

\makeatletter
\let\@fnsymbol\@arabic
\makeatother

\title{Statistical Variational Data Assimilation}

\author{Amina \scshape{Benaceur}\thanks{Institut f\"ur Angewandte und Numerische Mathematik, Karlsruher Institut f\"ur Technologie, Englerstr. 2, 76131 Karlsruhe, Germany} \and Barbara \scshape{Verf\"urth}\thanks{Institut f\"ur Numerische Simulation, Universit\"at Bonn, Friedrich-Hirzebruch-Allee 7, 53115 Bonn}}

\date{}
\begin{document}
	\maketitle
	\begin{abstract}
		This paper is a contribution in the context of variational data assimilation combined with statistical learning.
		The framework of data assimilation traditionally uses data collected at sensor locations in order to bring corrections to a numerical model designed using knowledge of the physical system of interest.
		However, some applications do not have available data at all times, but only during an initial training phase. Hence, we suggest to combine data assimilation with statistical learning methods; namely, deep learning.
		More precisely, for time steps at which data is unavailable, a surrogate deep learning model runs predictions of the `true' data which is then assimilated by the new model.
		In this paper, we also derive \textit{a priori} error estimates on this SVDA approximation. 
		Finally, we assess the method by numerical test cases.
		\newline
		
		\noindent\textbf{Key words.}
		neural networks, deep learning, data assimilation, PBDW.
	\end{abstract}
	
	\section{Introduction}
	
	State estimation is a task in which the quantity of interest is the `true' state $u^{\rm true}$ of a physical system over a space or space-time domain of interest.
	In general, numerical prediction using mathematical models based on the physical knowledge of a system may be deficient due to limitations imposed by available knowledge. 
	Data assimilation (DA) has the goal to overcome these limitations and produce more accurate predictions by incorporating experimental observations in numerical models.
	For this reason, data assimilation methods have been widely explored in the literature.
	
	The goal of these methods is to use \textit{a priori} information to deduce the best mathematical model,
	while using available experimental data to produce the most accurate approximation of a physical system. 
	Many data assimilation methods involve the minimization of a cost function.
	As opposed to statistical techniques such as bayesian data assimilation~\cite{bayes} or Kalman filtering~\cite{kalman}, variational data assimilation relies more heavily on the mathematical model. 
	Widely studied methods are the 3D-VAR~\cite{3dvar_veroy} and the 4D-VAR~\cite{4d-var}.
	Like many variational data assimilation methods, one of the drawbacks of 3D-VAR and 4D-VAR is their computational intrusivity, which means that at any stage, computational procedures need to access the model in order to perform their calculations. Intrusivity is very inconvenient in many contexts, for instance when using industrial high-fidelity black-box solvers. Hence, non-intrusive or partially-intrusive
	options can be valuable. 
	The parametrized background data weak (PBDW) method introduced in~\cite{yano} is non-intrusive. 
	It is a special case of 3D-VAR. It has been studied in many further works, with the presence of noise~\cite{yano_noise,hammond}, and in time-independent contexts~\cite{nki1,nki2}.
	Another drawback of variational data assimilation is that it assumes that data is available at all times, which is not the case in many industrial contexts. For instance, in numerical weather prediction, data is collected using weather balloons to be sent at predefined times. Another common industrial framework is that in which data collection campaigns are conducted during limited periods of time, whereas numerical investigation of the physical phenomena of interest may be carried out during longer periods.
	
	The key idea in this contribution is to use machine learning (ML) in combination with data assimilation in the above described context of partially available real data.
	In the literature, other works bring data assimilation and statistical learning methods together. 
	Equivalences between data assimilation and machine learning are discussed in the review paper~\cite{geer}, which provides a review of existing methods in the context of earth systems.
	In~\cite{ruck}, a DA-ML method is suggested to constrain the output for mass conservation. The chosen data assimilation method uses Ensemble Kalman filtering, and statistical learning is performed \textit{via} a convolutional neural network.
	Moreover,~\cite{brajard} suggests an offline DA-ML method, with iterative application of Ensemble Kalman filtering and convolutional neural networks. 
	The goal is to estimate the state at the current step given the state at the previous one.
	The core idea is to derive a convolutional neural network trained using data assimilation during the offline stage.
	Since data assimilation is only used offline, the resulting online processing is of statistical nature.
	Regarding the use of variational methods for data assimilation,~\cite{farchi} explores two ideas. 
	It compares the performance of DA-ML for resolvent correction and tendency correction using a modified 4D-VAR formulation.
	The offline training uses the DA-ML method introduced in \cite{brajard}.
	An online training of the neural network is also explored in~\cite{farchi}.
	
	In this paper, we try to overcome the data-availability concern by combining data assimilation with statistical prediction.
	The method --- we call it statistical variational data assimilation (SVDA) --- is split into two stages: an offline stage during which we train a deep learning method, and an online stage during which we run our data assimilation model, with real observations replaced by their statistically predicted counterparts.
	In order to circumvent the intrusivity of the classical data assimilation methods, we implement our method using the PBDW approach.
	
	The paper is organized as follows.
	Section~\ref{sec:svda} sets the notation and introduces the main idea of the SVDA. 
	Section~\ref{sec:DA} presents the chosen data assimilation and deep learning methods, respectively, and thereby the details of the SVDA.
	In Section~\ref{sec:analysis}, we present an error analysis of the method.
	Finally, in Section~\ref{sec:num}, the SVDA is illustrated by some numerical results.

	\section{Statistical Variational Data Assimilation}\label{sec:svda}
	In this context, we define a so-called `best-knowledge' model (\texttt{bk}) as the best possible model established by human expertise.
	The \texttt{bk} model is a mathematical model rendering the behavior of the system given all available knowledge about a physical system. 
	More concretely, we will use a PDE or a set of PDEs that best models the physical problem.
	
	In the literature, variational data assimilation methods are data-driven approaches that use observations to improve existing \texttt{bk} models.
	Data is collected, e.g. using sensors, and integrated to the \texttt{bk} model so as to build a data-driven model. 
	Algorithm~\ref{alg:DA} shows the steps of variational data assimilation.
	\begin{algorithm}[htb]
		\caption{Variational data assimilation\label{alg:DA}}
		\begin{algorithmic}[1]
			\vspace{0.2cm}
			\State Formulate a \texttt{bk} model.
			\State Set sensors at user-defined locations.
			\For{\text{every new calculation}}
			\State Collect observations at sensor locations.\label{line:collect}
			\State Solve the data assimilation problem using the collected observations as input.\label{line:solve}
			\EndFor
			\vspace{0.2cm}
		\end{algorithmic}
	\end{algorithm}
	However, ready-to-use data is not always available.
	In fact, a common engineering scenario is that in which measurements/observations are available for a given time interval or for given parametrizations explored during a training phase.
	This training phase is more commonly referred to as a `test campaign' in industrial and engineering scenarios.
	Thus, it is of great interest to develop appropriate and optimal use of the collected observations to understand the behavior of the system.
	In this section, we introduce a new method to overcome the issue of unavailable observations.
	More precisely, we do not deal with cases where some observations have been collected inaccuretely or are missing~\cite{missing_data}.
	We rather deal with situations in which data has been collected for given time windows but is not available at later times.
	Towards this end, we suggest to use prediction-based methods in order to approximate the observations.
	The predicted observations will then be plugged into a data assimilation framework as a surrogate to real-time data.
	In this paper, we use the Parametrized Background Data-Weak approach (PBDW) as a variational data assimilation method.

	As mentioned, we need to perform a statistical learning step in the SVDA.
	The goal of this step is to predict the alternative data needed for the data assimilation problem using chosen input variables.
	Many methods can be used depending on which inputs will be used to train the model.
	As an example, the behavior of linear phenomena can be predicted using linear regression~\cite{hastie_intro}, and nonlinear phenomena can be addressed using polynomial regression, or neural networks~\cite{hastie_elements}.
	In this paper, the phenomena of interest are time-dependent and nonlinear.
	Hence, in the presented framework, the SVDA relies on a type of Neural Networks (NN) called Long Short-Term Memory Recurrent Neural Networks (LSTM-RNN). Yet, the method can be run using any other deep learning or machine learning method~\cite{goodfellow}.
	The choice should be made in light of the intricacies of the problem of interest.

	\subsection{Main Ideas}
	Consider a finite time interval $I=[0,T]$, with $T>0$.
	To discretize in time, we consider an integer $K\geq1$, we define
	$ 0=t^0<\cdots<t^K=T$ as $(K+1)$ distinct
	time nodes over $I$, and we set $ \mathbb K^\mathrm{tr} = \{1,\ldots,K\}$, 
	$ \overline {\mathbb K}^\mathrm{tr} = \{0\}\cup\mathbb K^\mathrm{tr}$ and
	$I^\mathrm{tr} = \{t^k\}_{k\in \overline{\mathbb K}^\mathrm{tr}}$.
	This section aims at deriving a state estimate for a time-dependent solution.
	
	We assume the existence of an initial offline phase $[0,\Delta t]$ of width $\Delta t>0$, and an integer $k_{\rm off}>0$ such that 
	$\Delta t = t^{k_{\rm off}-1}$.
	In other words, $k_{\rm off}$ is the first index at which the observations are unavailable.
	During the offline phase, we use the data available in the time window $[0,\Delta t]$ to predict the evolution of the observables via a deep learning model (LSTM-RNN).
	The offline stage also consists of two additional steps: building a background space $\mathcal Z_N$ of dimension $N$, and building an observable space $\mathcal U_M$ of dimension $M$. 
	These spaces respectively model the solution space for the \texttt{bk} model (e.g. the PDE of interest) and the space used to capture the observations.
	Additional details on the construction of such spaces will be given in Section~\ref{sec:DA} below.
	
	Using the statistical (deep learning) LSTM-RNN model, data for future times (starting at time index $k_{\rm off}$) is predicted for all sensor locations.
	Despite the absence of real data, it now becomes possible to perform a more informed simulation for the next time steps without relying solely on the \texttt{bk} model.
	In fact, the LSTM-RNN provides a surrogate prediction of the system response to circumvent the collection step in line~\ref{line:collect} of Algorithm~\ref{alg:DA}.
	The data assimilation step in line~\ref{line:solve} of Algorithm~\ref{alg:DA} can then be run using a prediction of the true unavailable data observations.
	Hence, the final solution is obtained through a combination of a \texttt{bk} model and statistical learning.
	The online stage of the SVDA method is presented in Algorithm~\ref{alg:on_svda}.
	
	\begin{algorithm}[htb]
		\caption{Online stage of the SVDA\label{alg:on_svda}}
		\begin{algorithmic}[1]
			\vspace{0.2cm}
			\Statex {\scshape{\bfseries \underline {Input :}}} \texttt{bk} model, statistical model, $k_{\rm off}$.
			\vspace{0.2cm}
			\For{$k \in \{k_{\rm off},\ldots,K\}$}
			\State Compute the statistical prediction of observations at time $t^k$.
			\State Solve the SVDA online system at time $t^k$.
			\EndFor
			\vspace{0.2cm}
			\Statex {\scshape{\bfseries \underline  {Output :}}}
			SVDA future trajectory $\{u^{k,\rm svda}_{N,M}\}_{k_{\rm off}\leq k\leq K}$.
			\vspace{0.2cm}
		\end{algorithmic}
	\end{algorithm}

	\section{SVDA and PBDW Problem Formulation}\label{sec:DA}
	We consider a spatial domain (open, bounded, 
	connected subset) $\Omega \subset \mathbb{R}^d$, $d \geq 1$, with a Lipschitz boundary.
	We introduce a Hilbert space $\mathcal U$ composed of functions defined over $\Omega$. The space $\mathcal U$ is endowed with an inner product $\left( \cdot,\cdot \right)$ and we denote by $\|\cdot\|$ the induced norm; $\mathcal U$ consists of functions $\{w:\Omega\rightarrow \mathbb R\ |\ \|w\|<\infty \}$. To fix the ideas, we assume that $H^1_0(\Omega)\subset \mathcal U\subset H^1(\Omega)$, and we denote the dual space of $\mathcal U$ by $\mathcal U'$.
	The Riesz operator $R_\mathcal U:\mathcal U'\rightarrow \mathcal U$ satisfies, for each $\ell\in \mathcal U',$ and for all $v\in \mathcal U$, the equality
	$\left( R_\mathcal U(\ell),v \right)=\ell(v).$
	For any closed subspace $\mathcal Q\subset\mathcal U$, the orthogonal complement of $\mathcal Q$ is defined as 
	$\mathcal Q^\perp:=\{w\in\mathcal U\ |\left( w,v \right)=0,\ \forall v\in\mathcal Q\}.$
	Finally, we introduce a parameter set $\mathcal P\subset \mathbb R^p,\ p\geq 1$, whose elements are
	generically denoted by $\mu\in\mathcal P$.
	
	We recall that we use a `best-knowledge' (\texttt{bk}) mathematical model in the form of a parametrized PDE posed over the domain $\Omega$ (or more generally, over a domain $\Omega^{\rm bk}$ such that $\Omega\subset \Omega^{\rm bk}$).
	Then, we introduce the manifold associated with the solutions of the \texttt{bk} model $\mathcal M^{\rm bk}\subset \mathcal U$.
	In ideal situations, the true solution $u^{\rm true}$ is well approximated by the \texttt{bk} manifold, i.e., the model error 
	\begin{equation}\label{eq:eps_bk}
		\epsilon^{\rm bk}_{\rm mod}(u^{\rm true}):=\underset{z\in\mathcal M^{\rm bk}}{\rm inf}\ \|u^{\rm true}-z\|,
	\end{equation} 
	is very small.
	
	We introduce nested background subspaces $\mathcal Z_1\subset\ldots\subset \mathcal Z_N\subset\ldots\subset\mathcal U$ that are generated to approximate the \texttt{bk} manifold $\mathcal M^{\rm bk}$ to a certain accuracy. These subspaces can be built using various model-order reduction techniques, for instance, the Reduced Basis Method~\cite{rbm3,rbm1,rbm2}. Note that the indices of the subspaces conventionally indicate their dimensions. To measure how well the true solution is approximated by the background space $\mathcal Z_N$, we define the quantity
	$\epsilon^{\rm bk}_N(u^{{\rm true}}):=\inf_{z\in\mathcal Z_N}\|u^{{\rm true}}-z\|$.
	The background space is built so that
	$\epsilon^{\rm bk}_N(u^{{\rm true}})
	\underset{N\rightarrow +\infty}{\rightarrow} \epsilon^{\rm bk}_{\rm mod}(u^{{\rm true}})$.
	Moreover, we introduce the reduction error
	$\epsilon^{\rm bk}_{{\rm red},N} :=\underset{u\in\mathcal M^{\rm bk}}{\rm sup}\underset{z\in\mathcal Z_N}{\rm inf}\|u-z\|$,
	which encodes the loss of accuracy caused by solving the \texttt{bk} model in the $N$-dimensional background space $\mathcal Z_N$.
	For later purposes, we introduce $\Pi_{\mathcal Z_N}(u^{\rm true})$ as the closest point to $u^{\rm true}$ in $\mathcal Z_N$. 
	Note that $\Pi_{\mathcal Z_N}$ is the $\mathcal U$-orthogonal projection onto $\mathcal Z_N$.
	The background space $\mathcal Z_N$ can be interpreted as a prior space that approximates the \texttt{bk} manifold which we hope approximates well the true state $u^{{\rm true}}$.
	As previously alluded to, $u^{{\rm true}}$ rarely lies in $\mathcal M^{\rm bk}$ in realistic engineering study cases.
	In the remainder of this Section, we give a brief review of the PBDW method and present the SVDA setting in the PBDW context.

	\subsection{PBDW formulation in the time-dependent context}
	Since the SVDA builds upon the PBDW, we recap the main ideas of the PBDW formulation for time-dependent problems, following \cite{nki1,nki2}.
	
	Even when one can collect observations, full-knowledge of $u^{\rm true}$ is unrealistic. 
	In many engineering cases, only a limited number of experimental observations of the true state $u^{\rm true}$ is affordable --- interpreted as the application of prescribed observation functionals $\ell^{\rm obs}_m\in\mathcal U'$ for all $m\in\{1,\ldots,M\}$.
	One can consider any observation functional that renders the behavior of some physical sensor.
	
	In the time-dependent setting, we let these observation functionals act on time-averaged snapshots of the true solution.
	We also make the regularity assumption $u^{\rm true}\in L^1(I;\mathcal U)$ on the true state.
	We introduce the time-integration intervals
	\begin{equation}
		\mathcal I^k=[t^k-\delta t^k,t^k+\delta t^k],\quad\forall k\in\mathbb K^{\rm tr},
	\end{equation}
	where $\delta t^k>0$ is a parameter related to the time-precision of sensors.
	Then, for any function $v\in L^1(I;\mathcal U)$, we define the time-averaged snapshots
	\begin{equation}
		v^k(x):=\frac{1}{|\mathcal I^k|}\int_{\mathcal I^k} v(t,x)\ dt 
		\in\mathcal U,\quad\forall k\in\mathbb K^{\rm tr}.
	\end{equation}
	Now we consider
	\begin{equation}\label{eq:ell}
		\ell^{k,\rm obs}_m(u^{\rm true}):=\ell^{\rm obs}_m(u^{k,\rm true}),\quad\forall m\in\{1,\ldots,M\}, \ \forall k\in\mathbb K^{\rm tr}.
	\end{equation}
	For instance, if the sensors act through local uniform time integration, we have
	\begin{equation}\label{eq:td:sensors}
		\ell^{k,\rm obs}_m(u^{\rm true}) =\frac{1}{|\mathcal R_m|}\int_{\mathcal R_m} u^{k,\rm true}(x)\ dx
		=\frac{1}{|\mathcal R_m|}\frac{1}{|\mathcal I_k|} \int_{\mathcal R_m} \int_{\mathcal I_k} u^{\rm true}(t,x)\ dx dt.
	\end{equation}
	
	The most convenient configuration to collect observations in industrial contexts is to measure the quantities at user-defined space-time locations. In actual practice, sensors do not take pointwise measures but localized ones. A sensor collects the data that is enclosed in a small area centered at the sensor location. Hence, equation~\eqref{eq:td:sensors} means that the sensor returns a measurement that is equal to the space-time averaged quantity we are collecting. 
	Moreover, the observation functionals $\ell^{k,\rm obs}_m$ are Riesz representors of any type of physical data available for the user, and not necessarily pointwise or pointwise-like (integrations over small patches) observations.

	Generally, we introduce the time-independent observable space $\mathcal U_M\subset\mathcal U$ such that
	\begin{equation}\label{eq:UM}
		\mathcal U_M={\rm Span}\{q_1,\ldots,q_M\},
	\end{equation}
	where $q_m:=R_\mathcal U (\ell_m^{\rm obs})$ is the Riesz representation of $\ell_m^{\rm obs}\in\mathcal U'$, for all $m\in\{1,\ldots,M\}$, i.e., 
	\begin{equation}\label{eq:obs}
		\ell_m^{\rm obs}(u^{k,\rm true})=(u^{k,\rm true},q_m), \quad\forall m\in\{1,\ldots,M\},\quad\forall k\in\mathbb K^{\rm tr}.
	\end{equation}
	
	Note that, for fixed sensor locations, the computational effort to compute the Riesz representations of the observation functionals is time-independent and is incurred only once so that, for all $ m\in\{1,\ldots,M\}$ and $k\in\mathbb K^{\rm tr}$, the experimental observations of the true state satisfy: 
	\begin{align}\label{eq:td:riesz}
		\begin{split}
			\ell_m^{k,\rm obs}(u^{\rm true})&=\left( u^{k,{\rm true}},q_m \right)= \frac{1}{|\mathcal I_k|}\int_{\mathcal I^k}\ell_m^{\rm obs}(u^{\rm true}(t,\cdot))dt.
		\end{split}
	\end{align}
	Hence, for all $q\in\mathcal U_M$ such that,
	\begin{equation}
		q=\sum_{m=1}^M \alpha_m q_m,
	\end{equation}
	the inner product $\left( u^{k,{\rm true}},q \right)$ is deduced from the experimental observations as follows:
	\begin{equation}
		\begin{split}
			\left( u^{k,{\rm true}},q \right)
			&=\frac{1}{|\mathcal I_k|}\int_{\mathcal I^k}\sum_{m=1}^M\alpha_m\left( u^{{\rm true}}(t,\cdot),q_m \right)dt \\
			&=\frac{1}{|\mathcal I_k|}\sum_{m=1}^M\alpha_m\int_{\mathcal I^k}\ell_m^{\rm obs}(u^{{\rm true}}(t,\cdot))dt.
		\end{split}
	\end{equation}

	Henceforth, we make the crucial assumption that
	\begin{equation}\label{eq:ass}
		\mathcal Z_N\cap \mathcal U_M^\perp=\{0\},
	\end{equation} 
	which is also equivalent to
	\begin{equation}
		\beta_{N,M}:=\underset{w\in\mathcal Z_N}{\inf}\
		\underset{v\in\mathcal U_M}{\sup}
		\frac{(w,v)}{\|w\|\ \|v\|}\in(0,1],
	\end{equation}
	where $\beta_{N,M}$ is the so-called stability constant (the reader can refer to~\cite{myphd} for a proof).
	Assumption~\ref{eq:ass} can be viewed as a requirement to have enough sensors (note that $\mathcal Z_N\cap\mathcal U^\perp = \{0\}$).
	Under this assumption, the limited-observations PBDW statement reads: for each $k\in\mathbb K^{\rm tr}$, find $(u_{N,M}^{k,*}, z_{N,M}^{k,*}, \eta_{N,M}^{k,*})\in \mathcal U\times\mathcal Z_N\times \mathcal U_M$ such that
	\begin{subequations}\label{eq:td:limited_obs}
		\begin{align}(u_{N,M}^{k,*}, z_{N,M}^{k,*}&, \eta_{N,M}^{k,*}) = \underset{\eta_{N,M}\in\mathcal U_M}{\underset{z_{N,M}\in\mathcal Z_{N}}{\underset{u_{N,M}\in\mathcal U}{\rm arginf}}}\ 
			\|\eta_{N,M}\|^2, \label{eq:limited_obs1} \\ 
			\text{subject to }
			& (u_{N,M},v)=(\eta_{N,M},v)+(z_{N,M},v),\quad \quad\forall v\in \mathcal U, \\
			& (u_{N,M},\phi)=(u^{k,{\rm true}},\phi),\ \ \qquad\qquad\qquad\forall \phi\in \mathcal U_M.
		\end{align}
	\end{subequations}
	The limited-observations saddle-point problem associated with~\eqref{eq:td:limited_obs} reads: for each $k\in\mathbb K^{\rm tr}$, find $(z_{N,M}^{k,*},\eta_{N,M}^{k,*})\in\mathcal Z_{N}\times\mathcal U_M$ such that
	\begin{subequations}\label{eq:td:limited_euler}
		\begin{align}
			(\eta_{N,M}^{k,*},q)+(z_{N,M}^{k,*},q)&=(u^{k,{\rm true}},q),\quad \forall q\in \mathcal U_M, \label{eq:td:limited_euler1}\\
			(\eta_{N,M}^{k,*},p)&=0,\qquad\quad\quad\ \ \forall p\in \mathcal Z_{N},
		\end{align}
	\end{subequations}
	Hence, the limited-observations state estimate is
	\begin{equation}\label{eq:assemble}
		u_{N,M}^{k,*} = z_{N,M}^{k,*}+\eta_{N,M}^{k,*},\quad\forall k\in\mathbb K^{\rm tr}.
	\end{equation}
	In algebraic form, the limited-observations PBDW statement reads: for each $k\in\mathbb K^{\rm tr}$, find $(\bm z^{k,*}_{N,M},\bm\eta^{k,*}_{N,M})\in \mathbb R^N\times\mathbb R^M$ such that
	\begin{equation}\label{eq:svda:algeb_form0}
		\left(
		\begin{matrix}
			\mathbf A & \mathbf B \\
			\mathbf B^T & \mathbf 0
		\end{matrix}
		\right)
		\left(
		\begin{matrix}
			\bm\eta^{k,*}_{N,M} \\
			\bm z^{k,*}_{N,M}
		\end{matrix}
		\right)
		=
		\left(
		\begin{matrix}
			\bm\ell^{k,{\rm obs}}_M \\
			\mathbf 0
		\end{matrix}
		\right),
	\end{equation}
	with the matrices
	\begin{equation}\label{matricesAB}
		\mathbf A = \Big((q_{m'},q_{m})\Big)_{1\leq m,m'\leq M}\in\mathbb R^{M\times M},\
		\mathbf B = \Big((\zeta_n,q_{m})\Big)_{1\leq m\leq M,1\leq n\leq N}\in\mathbb R^{M\times N},
	\end{equation}
	where $\mathcal Z_N=\operatorname{span}\{\zeta_1,\ldots, \zeta_N\}$,
	and the vector of observations
	\begin{equation}
		\bm\ell^{k,{\rm obs}}_M = \big(\ell^{{\rm obs}}_m(u^{k,\rm true})\big)_{1\leq m\leq M} \in\mathbb R^{M}.
	\end{equation}
	Note that the matrices $\mathbf A$ and $\mathbf B$ are time-independent; only the right-hand side in~\eqref{eq:svda:algeb_form0} depends on $k$.
	
	\begin{remark}[3D-VAR]
		We highlight that the PBDW is a special case of 3D-VAR~\cite{lorenc} variational data assimilation.
		We introduce the bilinear forms $a:\calU\times\calU\rightarrow\R$ and $b:\calU\times\calU\rightarrow\R$, and the linear form $f:\calU\rightarrow\R$.
		In a noise-free context, the 3D-VAR method consists in solving	
		\begin{subequations}\label{3dvar}
			\begin{align}
				(z^{k,*}, \eta^{k,*}) = \underset{\eta\in \mathcal U_M}{\underset{z\in \mathcal U}{\rm arginf}}\ 
				\frac 1 2&\|\eta\|^2 + \frac \lambda 2\|\Pi_{\calU_M}u^{k,\rm true} - \Pi_{\calU_M} (z+\eta)\|^2 , \\
				\nonumber \\ 
				{\rm subject\ to\quad }
				a(z,v)&=f(v)+b(\eta,v),\quad \forall v\in \mathcal U,
			\end{align}
		\end{subequations}
		which is equivalent to solving
		\begin{subequations}\label{3dvar2}
			\begin{align}
				(z^{k,*}, \eta^{k,*}) = \underset{\eta\in\ \mathcal U}{\underset{z\in\ \mathcal Z}{\underset{u\in\ \mathcal U}{\rm arginf}}}\
				&\|\eta\|^2 , \\
				\nonumber \\ 
				{\rm subject\ to\qquad }
				a(z,v)&=f(v)+b(\eta,v),\quad \forall v\in \mathcal U,\label{eq:edp}\\
				(z+\eta,\phi)&=(u^{k,\rm true},\phi),\qquad\quad \forall v\in \mathcal U_M.
			\end{align}
		\end{subequations}
		If $b\equiv 0$, the constraint~\eqref{eq:edp} can be replaced by a requirement that $z$ belongs to the manifold of solutions
		$\calZ = \{u\in \calU\ |\ a(u,v) = f(v) \}$. 
		In this case, the 3D-VAR problem reads:
		\begin{subequations}\label{3dvar3}
			\begin{align}
				(u^*, z^*, \eta^*) = \underset{\eta\in\ \mathcal U}{\underset{z\in\ \mathcal Z}{\underset{u\in\ \mathcal U}{\rm arginf}}}\ 
				&\|\eta\|^2 , \\
				\nonumber \\ 
				{\rm subject\ to\quad }
				(u,v)&=(z,v)+(\eta,v),\qquad \forall v\in \mathcal U,\\
				(u,\phi)&=(u^{\rm true},\phi),\qquad\ \forall v\in \mathcal U_M,
			\end{align}
		\end{subequations}
		which is the same idea as the PBDW formulation.
	\end{remark}
	
	\subsection{SVDA-PBDW formulation}
	Let us now present the SVDA formulation of the PBDW problem.
	We recall that, in the SVDA context, the observations are not available at the time steps of interest.
	Hence, one cannot evaluate the observations $\ell_m^{k,\rm obs}(u^{\rm true})$ in~\eqref{eq:ell}, which will be replaced by their LSTM-RNN prediction.
	On the one hand, the true solution $u^{\rm true}$ is a continuous solution field in $\calU$.
	On the other hand, the LSTM-RNN is a deep learning method that predicts local values of the true solution field at given measurement locations.
	Hence, we will introduce the field $u^{\rm DL} \in\calU_M$ such that
	\begin{equation}
		u^{\rm DL} = \sum_{m=1}^M u^{\rm DL}_m q_m,
	\end{equation}
	where $u^{\rm DL}_m$ is the LSTM-RNN prediction of the $m$-th observation.
	It then holds that $\ell_m^{k,\rm obs}(u^{\rm DL})\approx \ell_m^{k,\rm obs}(u^{\rm true})$. 
	Using the RNN-LSTM prediction $u^{k,\rm DL}$, the problem reads: for each $k\in\mathbb K^{\rm tr}$, find $(z_{N,M}^{k,\rm svda},\eta_{N,M}^{k,\rm svda})\in\mathcal Z_{N}\times\mathcal U_M$ such that
	\begin{subequations}\label{eq:svda:limited_euler}
		\begin{align}
			(\eta_{N,M}^{k,\rm svda},q)+(z_{N,M}^{k,\rm svda},q)&=(u^{k,{\rm DL}},q),\quad \forall q\in \mathcal U_M, \label{eq:svda:limited_euler1}\\
			(\eta_{N,M}^{k,\rm svda},p)&=0,\qquad\quad\quad\ \ \forall p\in \mathcal Z_{N},
		\end{align}
	\end{subequations}
	Hence, the SVDA state estimate is
	\begin{equation}\label{eq:assemble_svda}
		u_{N,M}^{k,\rm svda} = z_{N,M}^{k,\rm svda}+\eta_{N,M}^{k,\rm svda},\quad\forall k\in\mathbb K^{\rm tr}.
	\end{equation}
	In algebraic form, the SVDA-PBDW statement reads: for each $k\in\mathbb K^{\rm tr}$, find $(\bm z^{k,\rm svda}_{N,M},\bm\eta^{k,\rm svda}_{N,M})\in \mathbb R^N\times\mathbb R^M$ such that
	\begin{equation}\label{eq:svda:algeb_form}
		\left(
		\begin{matrix}
			\mathbf A & \mathbf B \\
			\mathbf B^T & \mathbf 0
		\end{matrix}
		\right)
		\left(
		\begin{matrix}
			\bm\eta^{k,\rm svda}_{N,M} \\
			\bm z^{k,\rm svda}_{N,M}
		\end{matrix}
		\right)
		=
		\left(
		\begin{matrix}
			\bm\ell^{k,{\rm DL}}_M \\
			\mathbf 0
		\end{matrix}
		\right),
	\end{equation}
	with the matrices $\mathbf A, \mathbf B$ as in the original PBDW formulation above (cf.~\eqref{matricesAB})
	and the vector of LSTM-RNN predictions of the observations
	\begin{equation}\label{eq:ldl}
		\bm\ell^{k,{\rm DL}}_M = \big(\ell^{{\rm obs}}_m(u^{k,\rm DL})\big)_{1\leq m\leq M} \in\mathbb R^{M}.
	\end{equation}

	\subsection{SVDA in practice}
	SVDA can be used in different contexts.
	This paper mainly focuses on `future predictions', i.e., on generating machine learning observations from a \texttt{bk} model and given observations collected at previous times. The hope is to thereby improve the `future' simulations where real observations are not available.
	Another configuration for the use of the SVDA will be considered in the numerical test cases; namely SVDA in parametric contexts. In that case, the underlying model is a parameterized PDE and observations are available for a certain set of parameter(s). The SVDA can then be used for prediction-based data assimilation for other parameters in the sense that the machine learning model generates a `prediction' for the behavior of the model for the new parameter without real observations, and these machine learning predictions are used as surrogate data in the data assimilation procedure.
	
	Independent of the application case, the SVDA can algorithmically be divided into an offline and an online phase, cf.~Section~\ref{sec:svda}.
	During the \emph{offline stage} (cf.~Algorithm~\ref{alg:off_svda} below), one precomputes the functions $(\zeta_n)_{1\leq n \leq N}$ and the Riesz representers $(q_m)_{1\leq m \leq M}$ leading to the matrices $\mathbf A\in \mathbb R^{M\times M}$ and 
	$\mathbf B\in \mathbb R^{M\times N}$ as in \eqref{matricesAB} once and for all.
	One also runs the deep learning module on the training data in order to produce an LSTM-RNN prediction function, see below for a detailed description.
	
	During the \emph{online stage} (cf.~Algorithm~\ref{alg:on_svda}), one applies the LSTM-RNN prediction function to the input data in order to generate the vector of predicted observations $\bm \ell^{k,{\rm DL}}_M$ in~\eqref{eq:ldl}, see also below.
	Then, we apply the SVDA-PBDW formulation from the previous section, i.e., we solve the linear $(N+M)$-dimensional problem~\eqref{eq:svda:algeb_form} to retrieve $\bm z^{k,\rm svda}_{N,M}$ and $\bm\eta^{k,\rm svda}_{N,M}$.
	Finally, for each $k\in\{k_{\rm off},\ldots,K\}$, the SVDA state estimate $\bm u^{k,\rm svda}_{N,M}$ is deduced as follows:
	\begin{equation}\label{eq:ass_st_est}
		\bm u^{k,\rm svda}_{N,M}=
		\mathbf Z_N \bm z^{k,\rm svda}_{N,M}+
		\mathbf U_M \bm \eta^{k,\rm svda}_{N,M},
	\end{equation}
	where $\mathbf Z_N$ and $\mathbf U_M$ are the algebraic counterparts of $\calZ_N$ and $\calU_M$ respectively.
	
	\begin{algorithm}[htb]
		\caption{Offline stage of the SVDA\label{alg:off_svda}}
		\begin{algorithmic}[1]
			\vspace{0.2cm}
			\Statex {\scshape{\bfseries \underline {Input :}}} $\{q_1,\ldots,q_M\}$: a set of Riesz representations of the observations.
			\vspace{0.2cm}
			\State Compute $\mathcal Z_N={\rm span}\{\zeta_1,\ldots,,\zeta_N\}$.
			\State Set $\mathcal U_M:={\rm Span}\{q_1,\ldots,q_M\}$.
			\State Compute the matrices $\mathbf A$ and $\mathbf B$ using $\mathcal Z_N$ and $\mathcal U_M$.
			\State Compute the LSTM-RNN prediction function $\bm\ell_M^{\rm DL}$.
			\vspace{0.2cm}
			\Statex {\scshape{\bfseries \underline  {Output :}}}
			$\mathcal Z_N$, $\mathcal U_M$, $\mathbf A$ and $\mathbf B$, $\bm \ell_M^{\rm DL}$.
			\vspace{0.2cm}
		\end{algorithmic}
	\end{algorithm}
	
	\paragraph{Statistical training} We now describe the set-up of our LSTM-RNN model for the SVDA. Recall that the specific choice of the machine learning model can be exchanged in the SVDA formulation and should be adapted to the problem context. We therefore focus our description on the concepts used in the numerical experiments below. 
	
	For completeness, we briefly describe how a single LSTM cell works.
	As LSTMs are recurrent networks, there is a recurrent state denoted by $\mathbf{h}$. Additionally, an LSTM unit also includes a cell state $\mathbf{c}$. At unit $j$, $\mathbf{c}_j$ and $\mathbf{h}_j$ are computed from the previous steps $\mathbf{c}_{j-1}$ and $\mathbf{h}_{j-1}$ as well as from the input value $\mathbf{x}_j$, which reflects the input variable at time instance $t_j$. $\mathbf{h}_{j-1}$ and $\mathbf{x}_j$ are concatenated to $\hat{\mathbf{x}}_j$, from which the values between $0$ and $1$ of the \emph{forget, update and output gates} $\mathbf{f}_j, \mathbf{u}_j, \mathbf{o}_j$ are computed via
	\begin{align*}
		\mathbf{f}_j=\sigma(\mathbf{W}_f\hat{\mathbf{x}}_j+\mathbf{b}_f),\quad \mathbf{u}_j=\sigma(\mathbf{W}_u\hat{\mathbf{x}}_j+\mathbf{b}_u),\quad \mathbf{o}_j=\sigma(\mathbf{W}_o\hat{\mathbf{x}}_j+\mathbf{b}_o).
	\end{align*}
	Here, $\mathbf{W}_f, \mathbf{W}_u, \mathbf{W}_o$ and $\mathbf{b}_f, \mathbf{b}_u, \mathbf{b}_o$ are the trainable weights and biases, respectively, and $\sigma$ denotes the activation function. 
	The gate mechanism is the cornerstone of every LSTM unit and determines how the recurrent and cell state are updated. First a new candidate cell state $\tilde {\mathbf{c}}_j$ is computed as \[\tilde{\mathbf{c}}_j=\tanh(\mathbf{W}_c \hat{\mathbf{x}}_j+\mathbf{b}_c),\]
	where $\mathbf{W}_c$ and $\mathbf{b}_c$ are yet another trainable weight and bias, respectively. Then, the updated quantities are calculated via
	\[\mathbf{c}_j= \mathbf{f}_j\odot\mathbf{c}_{j-1}+\mathbf{u}_j\odot \tilde{ \mathbf{c}}_j, \qquad \mathbf{h}_j=\mathbf{o}_j\odot\tanh \mathbf{c}_j,\]
	where $\odot$ denotes the component-wise product. While $\mathbf{c}$ is an internal variable, the recurrent state $\mathbf{h}$ also serves as output, especially at the final LSTM unit.
	We refer the reader to~\cite{lstm} as well as the original articles \cite{lstm1,lstm2} for more details on LSTMs.
	
	For our LSTM-RNN model, we use $lb$ LSTM units with the so-called lookback variable $lb\leq k_{\mathrm{off}}-1$. In other words, this variable determines how many previous time steps are used for the prediction of the observables at the current time. 
	Consequently, the input variable has the shape $\mathbb{R}^{lb\times M}$. 
	Besides the LSTM units, our neural network consists of dense layer(s), see Figure \ref{fig:nn-architecture} for a sketch.
	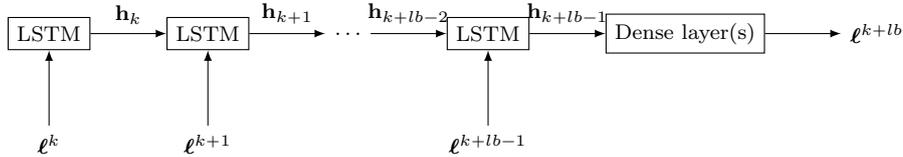
\begin{figure}
		\begin{tikzpicture}[font=\footnotesize]
			\node[draw] (LSTM1) at (0,0) {LSTM};
			\node[draw, right =of LSTM1] (LSTM2) {LSTM};
			\node[right=of LSTM2] (dots) {$\ldots$};
			\node[draw, right=of dots] (LSTM3) {LSTM};
			\node[draw, right=of LSTM3, align=left] (dense1) {Dense layer(s)};
			
			\node[below=of LSTM1](lk) {$\boldsymbol{\ell}^k$};
			\node[below=of LSTM2] (lk1) {$\boldsymbol{\ell}^{k+1}$};
			\node[below=of LSTM3] (lklb1) {$\boldsymbol{\ell}^{k+lb-1}$};
			\node[right=of dense1] (lklb) {$\boldsymbol{\ell}^{k+lb}$};
			
			\draw[-latex] (lk) edge (LSTM1);
			\draw[-latex] (lk1) edge (LSTM2);
			\draw[-latex] (lklb1) edge (LSTM3);
			\draw[-latex] (dense1) edge (lklb);
			
			\draw[-latex] (LSTM1) edge node[pos=0.5, above]{$\mathbf{h}_k$} (LSTM2);
			\draw[-latex] (LSTM2) edge node[pos=0.5, above]{$\mathbf{h}_{k+1}$} (dots);
			\draw[-latex] (dots) edge node[pos=0.5, above]{$\mathbf{h}_{k+lb-2}$} (LSTM3);
			\draw[-latex] (LSTM3) edge node[pos=0.5, above]{$\mathbf{h}_{k+lb-1}$} (dense1);
		\end{tikzpicture}
		\caption{Sketch of the LSTM-RNN setting in the numerical experiments. For better visibility, only the propagation of the recurrent state is depicted.}
		\label{fig:nn-architecture}
	\end{figure}
	The described neural network construction is used to emulate the map from $(\boldsymbol{\ell}^{k}, \ldots \boldsymbol{\ell}^{k+lb-1})$ to $\boldsymbol{\ell}^{k+lb}$ for each training time step $t^k$.
	For fixed $k^{\mathrm{off}}$, $lb$ also influences the number of available input/output training data pairs. To better illustrate this, let us consider two extreme cases. On the one hand, for $lb=1$, we have $k_{\mathrm{off}}-1$ training data pairs. On the other hand, for $lb=k_{\mathrm{off}}-1$, we have only a single training data pair.
	
	Once the network is trained, we can cheaply get predictions $\mathbf\ell^{k,DL}$ for any time step $t_k$ by providing observations at $lb$ previous time steps.
	We, hence, have to sequentially run the neural network several times to get all surrogate observations in $[\Delta t, T]$. Since the evaluation of a trained neural network is rather cheap, this sequential run comes at extremely low cost. 
	The computation of $\mathbf\ell^{k,DL}$ for times $t^k$ in $[\Delta t, T]$ can also be viewed as a machine learning based time stepping.

	\section{Error estimation}\label{sec:analysis}
	In this section, we establish an \textit{a priori} error analysis of the SVDA approximation. 
	We first recall an important result proved in~\cite{yano}.
	\begin{proposition}\label{prop:yano}
		At each time step $k\in\mathbb K^\mathrm{tr}$, the PBDW error estimation satisfies
		\begin{align}\label{eq:yano}
			\|u^{k,\rm true}-u^{k,*}_{N,M}\|
			&\leq
			\left(1+\frac{1}{\beta_{N,M}}\right) \underset{q\in\calU_M\cap\calZ_N^\perp}{\rm inf} \|\Pi_{\calZ_N}u^{k,\rm true}-q\|,
		\end{align}
		where $\beta_{N,M}$ is the stability constant defined as
		\begin{equation*}
			\beta_{N,M} := \underset{z\in\calZ_N}{\rm inf}\underset{q\in\calU_M}{\rm sup}
			\frac{(z,q)}{\|z\|\|q\|} \in (0,1] .
		\end{equation*}
	\end{proposition}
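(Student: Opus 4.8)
The plan is to exploit the saddle-point structure of~\eqref{eq:td:limited_euler} together with an equivalent form of the stability constant, and to split the error into its $\calZ_N$- and $\calZ_N^\perp$-components, which can be controlled separately and recombined by a triangle inequality. The starting point is the observation that, for every $z\in\calZ_N$, one has $\sup_{q\in\calU_M}(z,q)/\|q\|=\|\Pi_{\calU_M}z\|$, so that $\beta_{N,M}=\inf_{z\in\calZ_N}\|\Pi_{\calU_M}z\|/\|z\|$ and hence $\|\Pi_{\calU_M}z\|\geq\beta_{N,M}\|z\|$ for all $z\in\calZ_N$. Throughout I would write $r:=u^{k,\rm true}-\Pi_{\calZ_N}u^{k,\rm true}\in\calZ_N^\perp$ and abbreviate the solution of~\eqref{eq:td:limited_euler} as $(z^{k,*},\eta^{k,*})$.

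First I would read off two structural facts. The second equation of~\eqref{eq:td:limited_euler} shows $\eta^{k,*}\in\calZ_N^\perp$, and since $\eta^{k,*}\in\calU_M$ by construction, we get $\eta^{k,*}\in\calU_M\cap\calZ_N^\perp$. Next, testing~\eqref{eq:td:limited_euler1} against an arbitrary $q\in\calU_M\cap\calZ_N^\perp$ annihilates the term $(z^{k,*},q)$, because $z^{k,*}\in\calZ_N$, and splitting $u^{k,\rm true}=\Pi_{\calZ_N}u^{k,\rm true}+r$ removes the $\calZ_N$-part, yielding $(\eta^{k,*},q)=(r,q)$ for all $q\in\calU_M\cap\calZ_N^\perp$. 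This identifies $\eta^{k,*}$ as the $\calU$-orthogonal projection of $r$ onto $\calU_M\cap\calZ_N^\perp$, so that $\|r-\eta^{k,*}\|=\inf_{q\in\calU_M\cap\calZ_N^\perp}\|r-q\|$, i.e. exactly the best-approximation term on the right-hand side of~\eqref{eq:yano} (which is to be understood with infimand $u^{k,\rm true}-\Pi_{\calZ_N}u^{k,\rm true}-q$).

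The second step is the orthogonal error splitting. Using $u^{k,*}_{N,M}=z^{k,*}+\eta^{k,*}$ from~\eqref{eq:assemble} and projecting onto $\calZ_N$ and $\calZ_N^\perp$, I would write $u^{k,\rm true}-u^{k,*}_{N,M}=(\Pi_{\calZ_N}u^{k,\rm true}-z^{k,*})+(r-\eta^{k,*})$, where the first summand lies in $\calZ_N$ and the second in $\calZ_N^\perp$; the latter is already controlled by the first step. To bound the $\calZ_N$-part $\zeta:=\Pi_{\calZ_N}u^{k,\rm true}-z^{k,*}$, I would test~\eqref{eq:td:limited_euler1} against $q\in\calU_M$ and use $(z^{k,*},q)=(u^{k,\rm true},q)-(\eta^{k,*},q)$ to obtain $(\zeta,q)=-(r-\eta^{k,*},q)$. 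Taking the supremum over $q\in\calU_M$ and invoking the inf-sup inequality gives $\|\zeta\|\leq\beta_{N,M}^{-1}\|\Pi_{\calU_M}\zeta\|\leq\beta_{N,M}^{-1}\|r-\eta^{k,*}\|$. A triangle inequality on the two summands then yields $\|u^{k,\rm true}-u^{k,*}_{N,M}\|\leq(1+\beta_{N,M}^{-1})\|r-\eta^{k,*}\|$, which is~\eqref{eq:yano}.

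The step that I expect to need the most care is the handling of the test functions, which range over all of $\calU_M$ rather than over $\calU_M\cap\calZ_N^\perp$: both the projection identity for $\eta^{k,*}$ and the estimate for $\zeta$ rely on consistently using the decomposition $u^{k,\rm true}=\Pi_{\calZ_N}u^{k,\rm true}+r$ and the orthogonality $\eta^{k,*}\perp\calZ_N$ to discard the $\calZ_N$-contributions before estimating. I would also remark that the Pythagorean version of the final step gives the slightly sharper constant $\sqrt{1+\beta_{N,M}^{-2}}$; the stated factor $1+\beta_{N,M}^{-1}$ follows from it and is the form recorded in~\eqref{eq:yano}.
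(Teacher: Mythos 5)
Your proof is correct. Note first that the paper itself does not prove this proposition: its proof consists of the single line ``See~\cite{yano}, Proposition~2'', so there is no internal argument to compare against; your derivation is, however, essentially the standard PBDW \emph{a priori} analysis carried out in that reference. All the individual steps check out: the identity $\sup_{q\in\calU_M}(z,q)/\|q\|=\|\Pi_{\calU_M}z\|$ and the resulting inequality $\|\Pi_{\calU_M}z\|\geq\beta_{N,M}\|z\|$ on $\calZ_N$ are right; the second equation of~\eqref{eq:td:limited_euler} does give $\eta^{k,*}_{N,M}\in\calU_M\cap\calZ_N^\perp$; testing~\eqref{eq:td:limited_euler1} with $q\in\calU_M\cap\calZ_N^\perp$ does identify $\eta^{k,*}_{N,M}$ as the $\calU$-orthogonal projection of $r=u^{k,\rm true}-\Pi_{\calZ_N}u^{k,\rm true}$ onto $\calU_M\cap\calZ_N^\perp$, so $\|r-\eta^{k,*}_{N,M}\|$ equals the best-approximation term; and the inf-sup bound for the $\calZ_N$-component $\zeta$ is a correct use of the stability constant, with the concluding triangle inequality legitimate because the two error components lie in $\calZ_N$ and $\calZ_N^\perp$ respectively. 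Your closing remark that orthogonality actually yields the sharper constant $\sqrt{1+\beta_{N,M}^{-2}}\leq 1+\beta_{N,M}^{-1}$ is also valid. One point deserves emphasis: you are right that the infimand must be read as $\|u^{k,\rm true}-\Pi_{\calZ_N}u^{k,\rm true}-q\|$, i.e.\ with $\Pi_{\calZ_N^\perp}$ in place of $\Pi_{\calZ_N}$. As literally printed in~\eqref{eq:yano}, the infimum over $q\in\calU_M\cap\calZ_N^\perp$ of $\|\Pi_{\calZ_N}u^{k,\rm true}-q\|$ is attained at $q=0$ and equals $\|\Pi_{\calZ_N}u^{k,\rm true}\|$, which does not bound the error; the statement therefore contains a typo that your proof silently corrects, in agreement with the form of the estimate in the cited reference.
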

	\begin{proof}
		See~\cite{yano}, Proposition 2.
	\end{proof}
	Let us now estimate the SVDA approximation error.
	\begin{proposition}
		In the present context, the following upper bounds on the SVDA approximation hold true
		\begin{align} \label{eq:diff_star_svda0}
			\|u^{k,*}_{N,M}-u^{k,\rm svda}_{N,M}\|
			\leq
			\left(1+\frac{2}{\beta_{N,M}}\right) 
			\|\Pi_{\calU_M} u^{k,\rm true} - u^{k,\rm DL}\|.
		\end{align}
		\begin{align}\label{eq:diff_star_svda}
			\begin{alignedat}{2}
				\|u^{k,\rm true}-u^{k,\rm svda}_{N,M}\|
				&\leq
				\left(1+\frac{1}{\beta_{N,M}}\right) \underset{q\in\calU_M\cap\calZ_N^\perp}{\rm inf} \|\Pi_{\calZ_N}u^{k,\rm true}-q\| \\ &+\left(1+\frac{2}{\beta_{N,M}}\right) 
				\|\Pi_{\calU_M} u^{k,\rm true} - u^{k,\rm DL}\|.
			\end{alignedat}
		\end{align}
	\end{proposition}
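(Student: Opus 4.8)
The plan is to exploit the linearity of the two saddle-point formulations~\eqref{eq:td:limited_euler} and~\eqref{eq:svda:limited_euler}, which are identical except for their right-hand sides. First I would subtract the two systems and set $\delta z := z^{k,*}_{N,M} - z^{k,\rm svda}_{N,M}\in\calZ_N$ and $\delta\eta := \eta^{k,*}_{N,M} - \eta^{k,\rm svda}_{N,M}\in\calU_M$. The pair $(\delta z,\delta\eta)$ then satisfies $(\delta\eta,q)+(\delta z,q) = (u^{k,\rm true}-u^{k,\rm DL},q)$ for all $q\in\calU_M$, together with $(\delta\eta,p)=0$ for all $p\in\calZ_N$. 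The key observation is that, because every test function $q$ lies in $\calU_M$ and $u^{k,\rm DL}\in\calU_M$, the right-hand side only sees the $\calU_M$-component of $u^{k,\rm true}$, i.e. $(u^{k,\rm true}-u^{k,\rm DL},q)=(g,q)$ with $g:=\Pi_{\calU_M}u^{k,\rm true}-u^{k,\rm DL}\in\calU_M$. Thus $(\delta z,\delta\eta)$ is precisely the PBDW saddle-point solution driven by the data $g$, and $u^{k,*}_{N,M}-u^{k,\rm svda}_{N,M}=\delta z+\delta\eta$ is its state estimate; it then suffices to control the norm of this estimate by $\|g\|$.

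Next I would derive two stability estimates separately. For $\delta\eta$, testing the first equation with $q=\delta\eta\in\calU_M$ and using $(\delta z,\delta\eta)=0$ --- which holds since $\delta z\in\calZ_N$ while the second equation forces $\delta\eta\in\calZ_N^\perp$ --- yields $\|\delta\eta\|^2=(g,\delta\eta)\leq\|g\|\,\|\delta\eta\|$, hence $\|\delta\eta\|\leq\|g\|$. For $\delta z$, I would invoke the inf-sup characterization of $\beta_{N,M}$: since $\delta z\in\calZ_N$, we have $\beta_{N,M}\|\delta z\|\leq\sup_{q\in\calU_M}(\delta z,q)/\|q\|$. Rewriting the first equation as $(\delta z,q)=(g-\delta\eta,q)$ and applying the Cauchy--Schwarz inequality gives $(\delta z,q)\leq\|g-\delta\eta\|\,\|q\|\leq 2\|g\|\,\|q\|$, whence $\|\delta z\|\leq(2/\beta_{N,M})\|g\|$. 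Combining the two bounds through the triangle inequality,
\[
\|u^{k,*}_{N,M}-u^{k,\rm svda}_{N,M}\| = \|\delta z+\delta\eta\| \leq \|\delta z\|+\|\delta\eta\| \leq \Big(1+\tfrac{2}{\beta_{N,M}}\Big)\|g\|,
\]
which is exactly~\eqref{eq:diff_star_svda0} upon recalling $g=\Pi_{\calU_M}u^{k,\rm true}-u^{k,\rm DL}$.

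Finally, estimate~\eqref{eq:diff_star_svda} follows from a single application of the triangle inequality, $\|u^{k,\rm true}-u^{k,\rm svda}_{N,M}\|\leq\|u^{k,\rm true}-u^{k,*}_{N,M}\|+\|u^{k,*}_{N,M}-u^{k,\rm svda}_{N,M}\|$, bounding the first summand by Proposition~\ref{prop:yano} and the second by the estimate just established.

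I do not anticipate a serious obstacle, since the argument is purely algebraic and linear. The two points demanding care are the reduction of the right-hand side to $g=\Pi_{\calU_M}u^{k,\rm true}-u^{k,\rm DL}$ --- ensuring the PBDW map is forced only by the observable component of $u^{k,\rm true}$, so that no $\calZ_N$-related infimum term survives --- and the correct bookkeeping of the constant $2$ (rather than $1$) in the bound on $\|\delta z\|$, which arises because the effective right-hand side $g-\delta\eta$ contributes both $\|g\|$ and $\|\delta\eta\|\leq\|g\|$.
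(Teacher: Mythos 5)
Your proposal is correct and follows essentially the same route as the paper: subtract the two saddle-point systems, bound $\|\delta\eta\|$ by testing with $\delta\eta$ and using $\delta\eta\in\calZ_N^\perp$, bound $\|\delta z\|$ via the inf-sup constant $\beta_{N,M}$ (picking up the factor $2$ from the triangle inequality on the effective right-hand side), and conclude with the triangle inequality and Proposition~\ref{prop:yano}. The only cosmetic difference is that you introduce the reduced datum $g=\Pi_{\calU_M}u^{k,\rm true}-u^{k,\rm DL}$ up front, whereas the paper performs that projection step inside the estimates.
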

	\begin{proof}
		\begin{enumerate}
			\item By subtracting~\eqref{eq:svda:limited_euler1} from~\eqref{eq:td:limited_euler1}, we obtain
			\begin{equation*}\label{eq:pbdw_diff}
				(\eta^{k,*}_{N,M} - \eta^{k,\rm svda}_{N,M},q) + (z^{k,*}_{N,M} - z^{k,\rm svda}_{N,M},q) = (u^{k,\rm true} - u^{k,\rm DL},q),\qquad \forall q\in\calU_M.
			\end{equation*}
			Let us choose the test function $q=\eta^{k,*}_{N,M} - \eta^{k,\rm svda}_{N,M}\in\calU_M$ as a test function.
			Using the fact that $\eta^{k,*}_{N,M} - \eta^{k,\rm svda}_{N,M}\in\calZ_N^\perp$ along with the Cauchy-Schwarz inequality, we get
			\begin{align*}
				\|\eta^{k,*}_{N,M} - \eta^{k,\rm svda}_{N,M}\|^2 
				&= \left(\Pi_{\calU_M}(u^{k,\rm true} - u^{k,\rm DL}), \eta^{k,*}_{N,M} - \eta^{k,\rm svda}_{N,M}\right)\\
				&\leq \|\Pi_{\calU_M}(u^{k,\rm true} - u^{k,\rm DL})\|\
				\|\eta^{k,*}_{N,M} - \eta^{k,\rm svda}_{N,M}\|.
			\end{align*}
			Hence,
			\begin{equation}\label{eq:majoration_eta}
				\|\eta^{k,*}_{N,M}-\eta^{k,\rm svda}_{N,M}\|
				\leq
				\|\Pi_{\calU_M} (u^{k,\rm true} - u^{k,\rm DL})\|.
			\end{equation}
			Moreover, $(z^{k,*}_{N,M} - z^{k,\rm svda}_{N,M})\in\calZ_N$ leads to
			\begin{align*}
				\begin{alignedat}{2}
					\beta_{N,M} \|z^{k,*}_{N,M} - z^{k,\rm svda}_{N,M}\| 
					&\leq
					\underset{q\in\calU_M}{\rm sup}\frac{(z^{k,*}_{N,M} - z^{k,\rm svda}_{N,M},q)}{\|q\|}\\
					&=
					\underset{q\in\calU_M}{\rm sup}\frac{(u^{k,\rm true} - u^{k,\rm DL} - (\eta^{k,*}_{N,M} - \eta^{k,\rm svda}_{N,M}),q)}{\|q\|}\\
					&\leq 
					\|\Pi_{\calU_M}(u^{k,\rm true} - u^{k,\rm DL})\|
					+\|\eta^{k,*}_{N,M} - \eta^{k,\rm svda}_{N,M}\|\\
					&\leq 
					2\|\Pi_{\calU_M}u^{k,\rm true} - u^{k,\rm DL}\|,
				\end{alignedat}
			\end{align*}
			where the last inequality follows from~\eqref{eq:majoration_eta} and the fact that $u^{k,\rm DL}\in \calU_M$.
			Hence,
			\begin{equation}\label{eq:majoration_z}
				\|z^{k,*}_{N,M} - z^{k,\rm svda}_{N,M}\| 
				\leq
				\frac{2}{\beta_{N,M}} \|\Pi_{\calU_M} u^{k,\rm true} - u^{k,\rm DL}\|.
			\end{equation}
			Combining~\eqref{eq:majoration_eta} and~\eqref{eq:majoration_z}, we obtain
			\begin{equation}\label{eq:majoration_DL}
				\|u^{k,*}_{N,M} - u^{k,\rm svda}_{N,M}\| 
				\leq
				\left(1+\frac{2}{\beta_{N,M}}\right) \|\Pi_{\calU_M} u^{k,\rm true} - u^{k,\rm DL}\|.
			\end{equation}
			The spirit of this proof is similar to that of noisy observations~\cite{yano_noise}.
			\item Using the triangle inequality, we have 
			\begin{align}\label{eq:tri}
				\begin{alignedat}{2}
					\|u^{k,\rm true}-u^{k,\rm svda}_{N,M}\| &\leq \|u^{k,\rm true}-u^{k,*}_{N,M}\| + \|u^{k,*}_{N,M}-u^{k,\rm svda}_{N,M}\|.
				\end{alignedat}
			\end{align}
			It follows from~\eqref{eq:tri}, ~\eqref{eq:yano}, and~\eqref{eq:majoration_DL} that
			\begin{align}
				\|u^{k,\rm true}-u^{k,\rm svda}_{N,M}\|
				&\leq
				\left(1+\frac{1}{\beta_{N,M}}\right) \underset{q\in\calU_M\cap\calZ_N^\perp}{\rm inf} \|\Pi_{\calZ_N}u^{k,\rm true}-q\| \\ &+\left(1+\frac{2}{\beta_{N,M}}\right) 
				\|\Pi_{\calU_M} u^{k,\rm true} - u^{k,\rm DL}\|.
			\end{align}
		\end{enumerate}
	\end{proof}
	The result in~\eqref{eq:diff_star_svda} shows that the quality of the SVDA approximation has two contributions. 
	The first contribution depends on the quality of the PBDW spaces. The better the quality of the background space $\calZ_N$ and the observable space $\calU_M$, the smaller the error $\|u^{k,\rm true}-u^{k,\rm svda}_{N,M}\|$.
	The second contribution is related to the quality of the statistical prediction; in our case, that of the LSTM-RNN.
	The more accurate $u^{k,\rm DL}$, the smaller the error $\|u^{k,\rm true}-u^{k,\rm svda}_{N,M}\|$.
	In concrete applications, both error contributions may be estimated further or assessed with an indicator to yield an overall bound for the SVDA quality.

	\section{Numerical Results}\label{sec:num}
	In this section, we implement the above developments.
	The goal is to illustrate the computational performance of the SVDA method.
	We resort to the following numerical strategy:
	\begin{enumerate}
		\item Synthesize two different models out of the same PDE. Towards that end, an efficient strategy is to change a (physical) parameter. The first model will be considered as the `true' model and the second will be the `best-knowledge' model.
		\item Using the `true' model, create training data for the initial time interval $[0,\Delta t]$.
		\item Train the statistical model using LSTM-RNN.
		\item Run the online SVDA.
	\end{enumerate}
	We consider a two-dimensional setting based on the plate illustrated in the left panel of Figure~\ref{fig:plate} with
	$\Omega=(-2,2)^2\subset \mathbb R^2$.
	\begin{figure}[htb]
		\includegraphics[scale=0.43]{./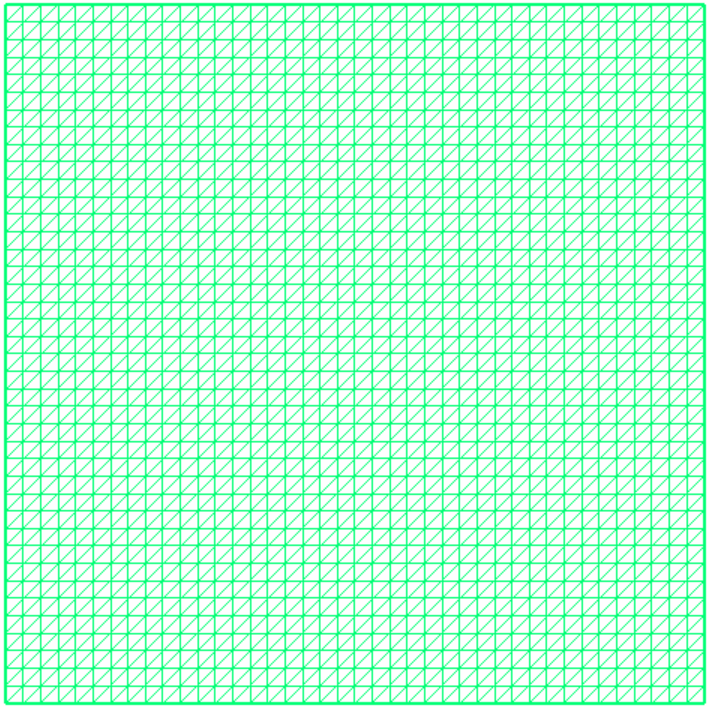} 
		\qquad
		\includegraphics[scale=0.43]{./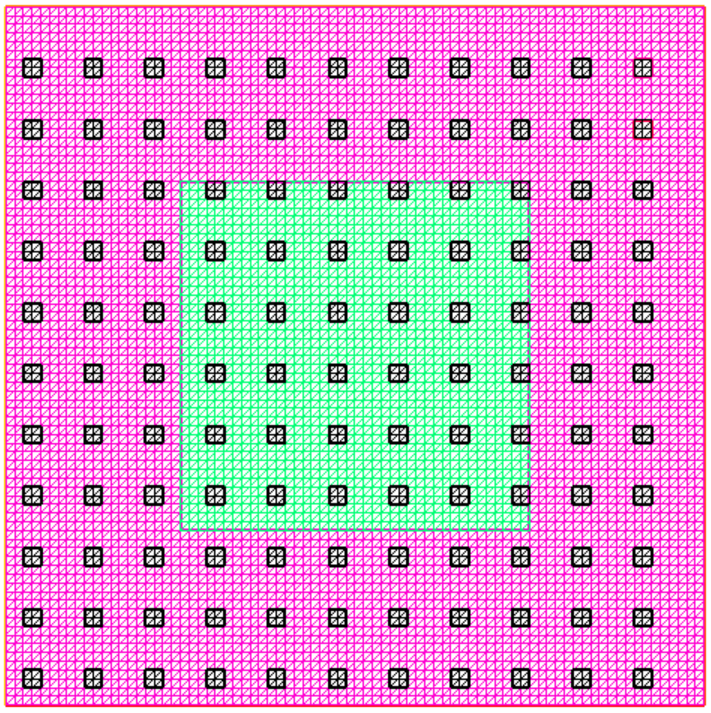} 
		\centering 
		\caption{Computational domain and mesh with $\mathcal N = 6561$.
			The little black squares are observation subsets $\{\mathcal R_m\}_{m=1}^{121}$.
			Left: Mono-material plate.
			Right: Bi-material plate.}
		\label{fig:plate}
	\end{figure}
	We use a finite element (FE)~\cite{ernGuer} subspace $\mathcal U^\mathcal N\subset \mathcal U= H^1(\Omega)$, where $H^1(\Omega)$ is the linear space of square integrable and differentiable functions defined on $\Omega$.
	The subspace $\mathcal U$ consists of continuous, piecewise affine functions in order to generate FE solutions. 
	The FE subspace $\mathcal U^\mathcal N$ is based on a mesh that contains $\mathcal N = 6561$ nodes.
	The experimental data is generated synthetically and the observation subsets $\{\mathcal R_m\}_{1\leq m\leq M}$  are uniformly selected over the plate as illustrated in the right panel of Figure~\ref{fig:plate}.
	Regarding the implementation, the FE computations use the software \texttt{FreeFem++}~\cite{freefem}, the SVDA algorithm has been developed in \texttt{Python}. The deep learning subroutines of the SVDA use the \texttt{Python} library \texttt{Tensorflow.keras}~\cite{keras}.
	
	\subsection{Physical Model Problem}
	We apply the above methodology to the following parabolic PDE: For many values of the parameter $\mu\in\mathcal P$, find 
	$u(\mu):I\times \Omega\rightarrow \mathbb R $ such that
	\begin{align}\label{HTE}
		\left\{
		\begin{alignedat}{2}
			\frac{\partial u(\mu)}{\partial t}
			-\nabla \cdot \left( D(\mu) \nabla u(\mu) \right)& 
			= 0,\qquad &&\text{in}\ I\times\Omega , \\
			u(\mu)(t=0,\cdot)&= u_0,\quad &&\text{in }\Omega,\\
			\text{Boundary cond}&\text{itions},\quad &&\text{on }I\times\partial\Omega,
		\end{alignedat}
		\right.
	\end{align}
	where $u_0=293.15$K ($20^o$C).
	We will supplement~\eqref{HTE} with Stefan--Boltzmann boundary conditions on $\partial\Omega$, i.e.,
	\begin{equation}\label{eq:nonlin_bcs}
		-D(\mu)\frac{\partial u}{\partial n} = \sigma\varepsilon(u^4-u_r^4),\qquad\text{on}\  I\times\partial\Omega,
	\end{equation}
	with an enclosure temperature $u_r = 303.15$K ($30^o$C), the Stefan--Boltzmann constant $\sigma=5.67\times10^{-8}$W.m$^{-2}$.K$^{-4}$, and an emissivity $\varepsilon=3.10^{-3}$. The Stefan--Boltzmann boundary condition is nonlinear.
	Hence, the resulting problem~\eqref{HTE}$-$\eqref{eq:nonlin_bcs} is nonlinear.
	In what follows, the background spaces $\mathcal Z_N$ will be generated by solving the nonlinear PDE~\eqref{HTE}$-$\eqref{eq:nonlin_bcs} with a uniform diffusivity function $D(\mu)$ such that for all $x\in\Omega$, $D(\mu)(x)=D_{\rm uni}(\mu)(x):=\mu \mathbf {1}_{\Omega}(x)$ (mono-material plate, cf.~left panel of Figure~\ref{fig:plate}).
	
	\subsection{Synthetic Data Generation}\label{sec:synth}
	We generate the data by first synthesizing a true solution and then applying to it the linear functionals by means of their Riesz representations in the observable space $\mathcal U_M$.
	In order to synthesize the true solution, we consider a `true model' based on the bi-material plate (cf. right panel of Figure~\ref{fig:plate}) where we choose a fixed internal diffusivity $D_{\rm int}=1$ and define, for each $\mu\in\mathcal P$, the diffusivity function $D(\mu)$ as
	$D(\mu)(x)=D_{\rm syn}(\mu)(x):=\mu D_{\rm int}\mathbf {1}_{\Omega_{\rm ext}}(x)+ D_{\rm int}\mathbf 1_{\Omega_{int}}(x)$, for all $x\in\Omega$, where $\Omega_{\rm int} =(-1,1)^2$ and
	$\Omega_{\rm ext} =(-2,2)^2\setminus(-1,1)^2$,
	so that $\overline\Omega = \overline\Omega_{\rm int}\cup\overline\Omega_{\rm ext}$ and $\Omega_{\rm int}\cap\Omega_{\rm ext}=\emptyset$.
	The synthetic true solutions are then defined as the solutions of~\eqref{HTE}$-$\eqref{eq:nonlin_bcs}  for all $\mu\in\mathcal P$.

	\subsection{Test case (a) : Future forecast}\label{sec:test}
	
	For time discretization, we consider the time interval $I = [0,2.5]$s,
	a constant time step $\tau^k=1.25\times 10^{-2}$s for all $k \in \mathbb K^\mathrm{tr}$, and the set of discrete time nodes $\mathbb K^\mathrm{tr} = \{1,\ldots,200\}$.
	The parameter of the true model is fixed to $\mu=15$, both for training as well as testing.
	We first build the \texttt{bk} space. 
	Using a Proper Orthogonal decomposition (POD), we obtain $N = 4$ basis functions.
	We train our LSTM-RNN (cf.~Section~\ref{sec:svda}) with 2 additional dense hidden layers and 1 dense output layer using the data on the first $50$ time steps. 
	Using a mean squared error loss function and the Adam optimizer with a learning rate $10^{-2}$, we perform the SVDA prediction for the remaining future time steps. 
	We refer the reader to~\cite{hastie_elements} for more details on how to fit a neural network (number of hidden layers, optimizer choice, etc).
	We recall that the non-deterministic nature of the LSTM-RNN propagates to the SVDA. 
	Hence, the output results are tendencies of the solution behavior and not exactly reproducible solutions.
	For the values of the lookback parameter $lb\in\{1,7\}$ (cf. Section \ref{sec:svda}), Figure~\ref{fig:time_err} displays the relative $L^2$ errors to the true solution.
	
	We clearly see that the machine learning surrogates improve the accuracy of the future prediction in comparison to using only the \texttt{bk} model (in green). 
	Comparable results --- errors of about one percent --- are also achieved if we increase the training time window to the first $100$ time steps (data not shown).
	Note that the orange line, which depicts the (unrealistic) situation that true observations are available for data assimilation also at future time steps, is \emph{not} a lower bound for the SVDA error. The reason is that the data assimilation problem is constrained and thus the pure PBDW approach may not find the best solution in the discrete space. Consequently, the SVDA may find an even better solution candidate in the same space resulting in a smaller error value. 
	As expected, errors grow over time as we depart from the time window with true observations available at close previous steps, but the error growth is rather moderate overall.
	Since the results are quite similar for the different lookback values, we will focus on $lb=1$ in the following.
	
	\begin{figure}[htb]
		\includegraphics[width=0.48\textwidth,trim = 20mm 0mm 45mm 15mm, clip=true]{./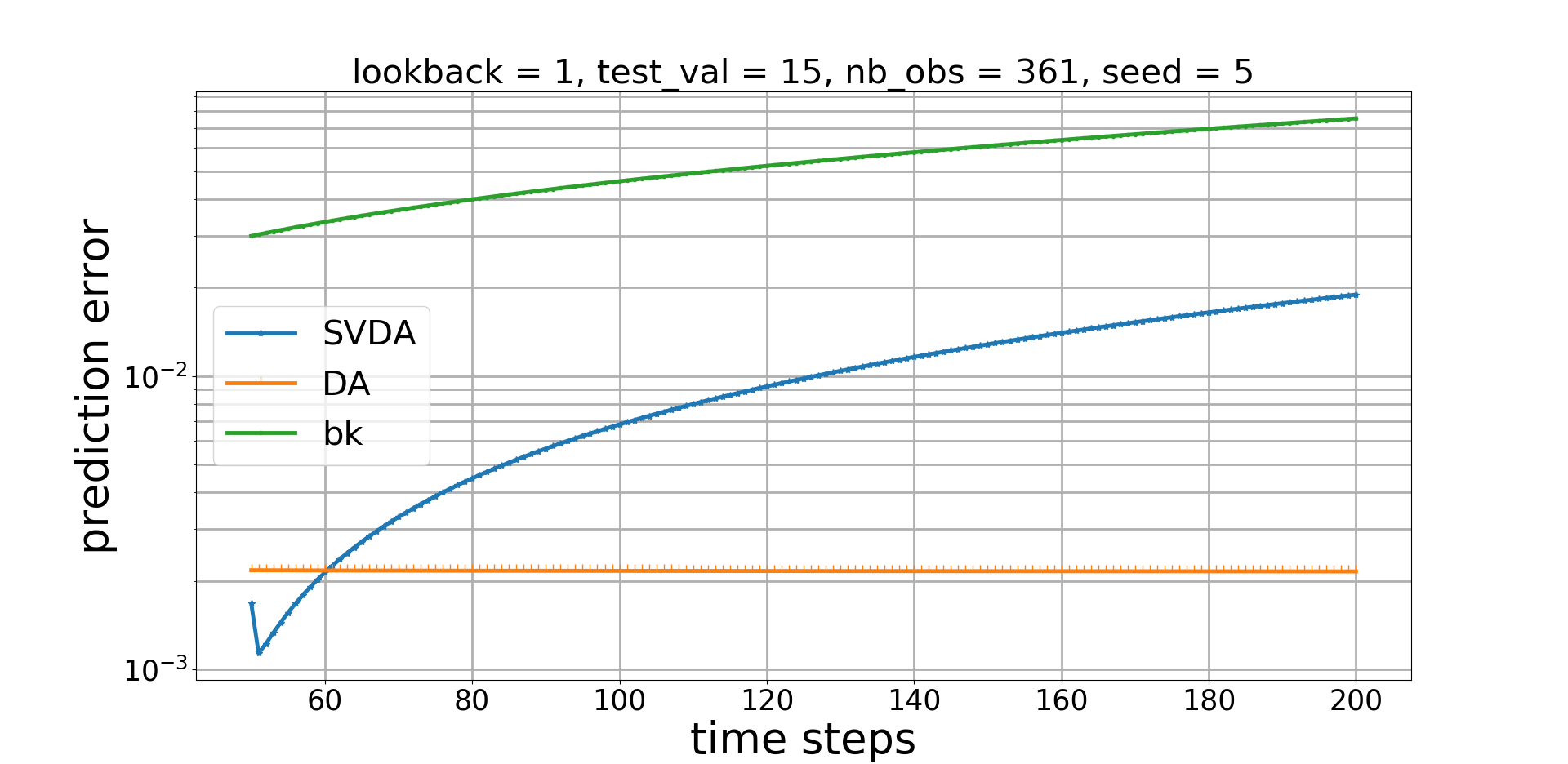}
		\includegraphics[width=0.48\textwidth, trim = 20mm 0mm 45mm 15mm, clip=true]{./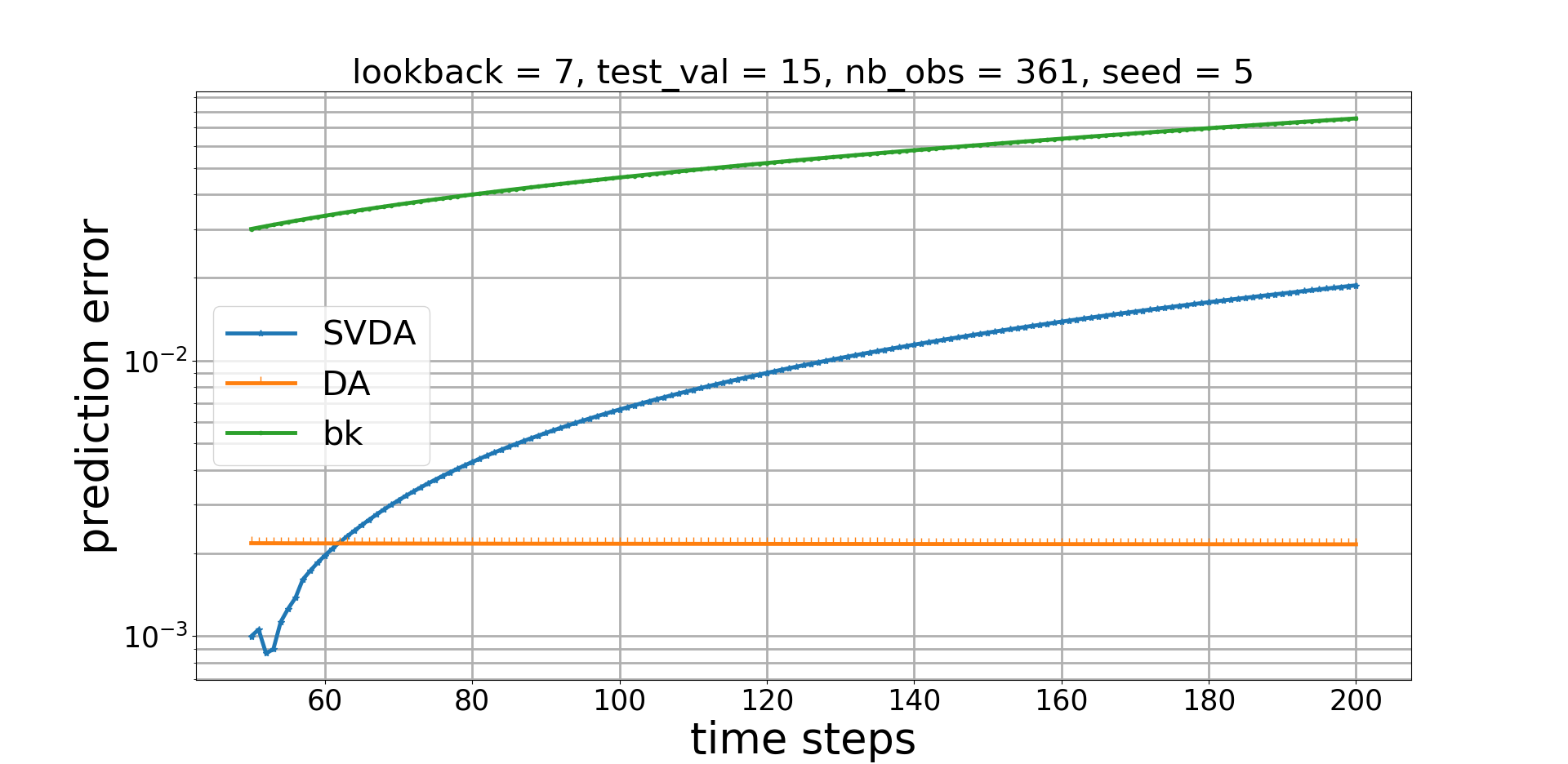} 
		\centering 
		\caption{Error estimation for $lb=1$ (left) and $lb=7$ (right).}
		\label{fig:time_err}
	\end{figure}

	\subsection{Test case (b) : Parametric forecast}\label{sec:testa}
	For a parametric forecast, we train the LSTM-RNN using the true model with $\mu=15$ over the whole time interval $[0,2.5]s$. The time steps, the spatial discretization and the network architecture are the same as in Test case (a).
	We use the trained LSTM-RNN to create surrogate data at all time steps for the value $\mu=17$ and integrate those surrogate observations into the SVDA.
	Figure~\ref{fig:parametric_lb1} shows the relative $L^2$ errors to the true solution.
	As announced, we focus on the case $lb=1$. 
	Although the parameter change from $\mu=15$ for the \texttt{bk} model to $\mu=17$ for the true model is very moderate, the green line for the \texttt{bk} error grows immediately  and the average \texttt{bk} error is about $4.5\cdot 10^{-2}$.
	Again, the SVDA is able to improve the accuracy  and its average error is about $6\cdot10^{-3}$. 
	Interestingly, the error even drops quite drastically somewhere in the middle of time simulation. 
	We believe that the nonlinearity of the model as well as the statistical training allow for such an error behavior, which is rather uncommon for traditional numerical methods.
	Finally, we mention that comparable results for the SVDA were even obtained when testing for other parameters -- even if they were rather different from the \texttt{bk} parameter $\mu=15$.
	
	\begin{figure}[htb]
		\centering 
		\includegraphics[width=0.68\textwidth, trim = 20mm 0mm 45mm 15mm, clip=true]{./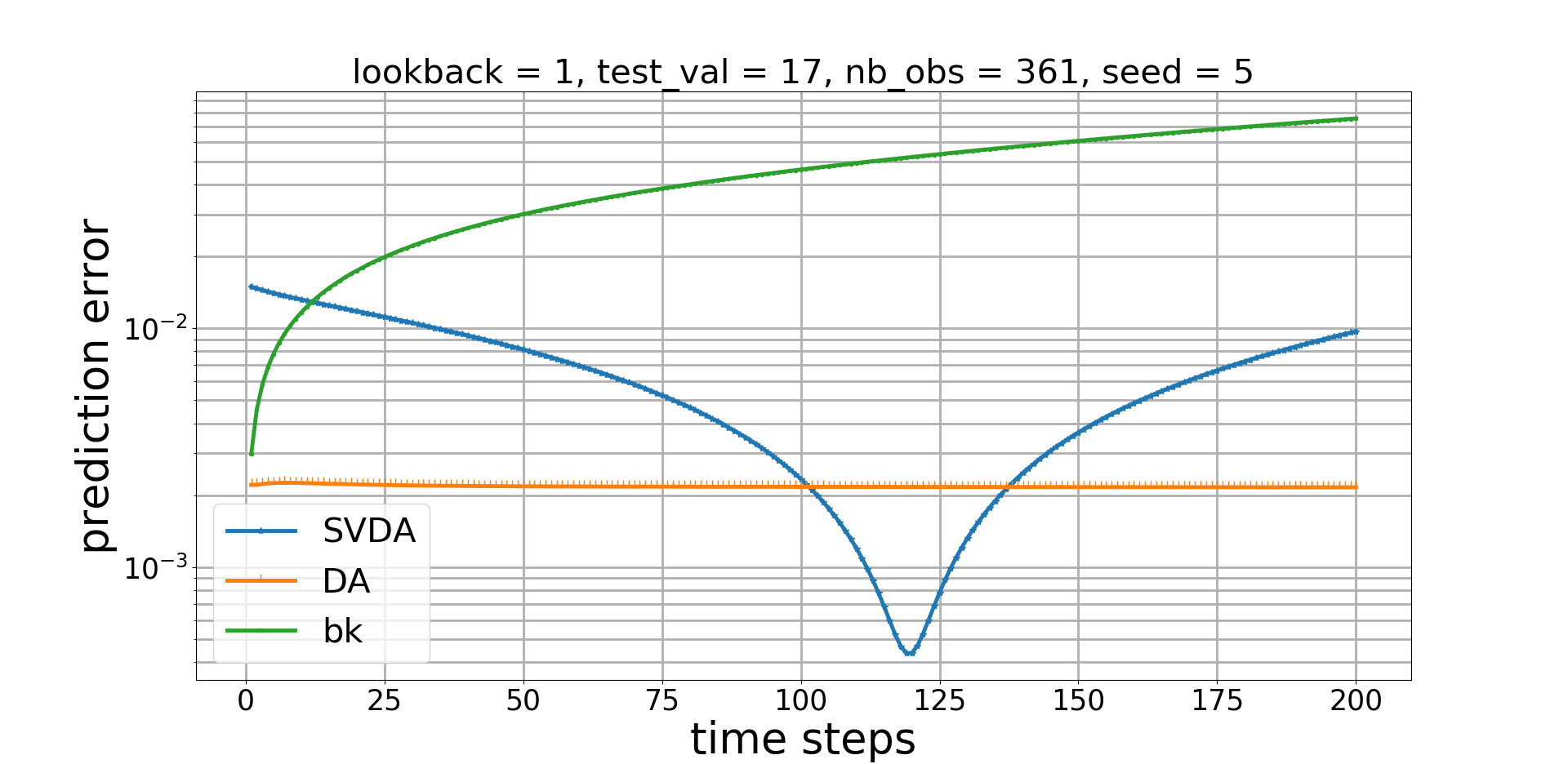} 
		\caption{Error estimation in the parametric case.}
		\label{fig:parametric_lb1}
	\end{figure}

	\section*{Conclusion}
	We introduced a statistical variational data assimilation (SVDA) method providing a new concept that combines machine learning methods with traditional data assimilation.
	While we explained the details of the framework based on long-short term memory (LSTM) networks and the parametric background data weak (PBDW) approach for data assimilation, the key idea is very flexible and versatile and can be easily adapted to other neural networks and data assimilation schemes.
	The core idea is to train a neural network based on available observations and use the network's predictions as surrogate data in situations where no observations are available, but data assimilation is needed or wished.
	We rigorously proved that the overall SVDA error is bounded above by the quality of discrete (approximation) spaces used in data assimilation and the quality of the machine learning model. 
	To illustrate the performance and applicability of the SVDA in practice, we considered a parametric heat equation with nonlinear boundary conditions. Both for future and parametric forecasts, the SVDA showed promising results. The flexibility of the framework entails that we cannot test the plethora of potential applications. Further investigations for more realistic scenarios (e.g. weather prediction) and on the influence of the various method parameters (lookback, neural network choice, etc.) are interesting future research directions.

	\section*{Acknowledgments}
	This work is funded by the Federal Ministry of Education and Research (BMBF) and the Baden-Württemberg Ministry of Science as part of the Excellence Strategy of the German Federal and State Governments.
	The authors acknowledges TEEMLEAP members Peter Knippertz, Sebastian Lerch, Uwe Ehret, Jörg Meyer, Julian Quinting, and Jannik Wilhelm for stimulating discussions.
	The help of the student assistant Klara Becker in running additional simulations is gratefully acknowledged.
	This work was conducted while both authors where affiliated with Karlsruhe Institute of Technology.
	
	\bibliographystyle{abbrv}
	\bibliography{teemleap}
\end{document}